\theoremstyle{plain}
\newtheorem{theorem}{Theorem}[section]
\newtheorem{proposition}[theorem]{Proposition}
\theoremstyle{definition}
\newtheorem{remark}{Remark}[section]
\DeclareMathOperator{\dv}{div}
\newcommand{\maketable}[1]{%
\pgfplotstabletypeset[
header=false,
font=\small,
clear infinite,
every head row/.style={before row=\hline,after row=\hline},
every last row/.style={after row=\hline},
every row no 4/.style={before row=\hline},
every row no 8/.style={before row=\hline},
every row no 12/.style={before row=\hline},
create on use/newcol/.style={
        create col/set list={NaN,0,NaN,NaN,NaN,1,NaN,NaN,NaN,2,NaN,NaN,}
},
columns={newcol,0,1,2,3,4,5,6},
columns/newcol/.style={column type/.add={|}{},column name={$s$}},
columns/0/.style={sci zerofill,column type/.add={|}{|},column name={$h^{-1}$}},
columns/1/.style={dec sep align={c|},sci,sci 10e,sci zerofill,precision=2,column type/.add={}{|},column name={$\|u_h-u\|_{L^2(\Omega)}$}},
columns/3/.style={dec sep align={c|},sci,sci 10e,sci zerofill,precision=2,column type/.add={}{|},column name={$\|\rho_h-\rho\|_{L^2(\Omega)}$}}, 
columns/5/.style={dec sep align={c|},sci,sci 10e,sci zerofill,precision=2,column type/.add={}{|},column name={$\|p_h-p\|_{L^2(\Omega)}$}}, 
columns/2/.style={dec sep align={c|},fixed zerofill,precision=2,column type/.add={}{|},column name={Rate}},
columns/4/.style={dec sep align={c|},fixed zerofill,precision=2,column type/.add={}{|},column name={Rate}},
columns/6/.style={dec sep align={c|},fixed zerofill,precision=2,column type/.add={}{|},column name={Rate}},
]
{#1}
}
\newcommand{\maketabledt}[1]{%
\pgfplotstabletypeset[
header=false,
font=\small,
clear infinite,
every head row/.style={before row=\hline,after row=\hline},
every last row/.style={after row=\hline},
every row no 4/.style={before row=\hline},
every row no 8/.style={before row=\hline},
columns={0,1,2,3,4,5,6},
columns/0/.style={sci zerofill,column type/.add={|}{|},column name={$\Delta t^{-1}$}},
columns/1/.style={dec sep align={c|},sci,sci 10e,sci zerofill,precision=2,column type/.add={}{|},column name={$\|u_h-u\|_{L^2(\Omega)}$}},
columns/3/.style={dec sep align={c|},sci,sci 10e,sci zerofill,precision=2,column type/.add={}{|},column name={$\|\rho_h-\rho\|_{L^2(\Omega)}$}}, 
columns/5/.style={dec sep align={c|},sci,sci 10e,sci zerofill,precision=2,column type/.add={}{|},column name={$\|p_h-p\|_{L^2(\Omega)}$}}, 
columns/2/.style={dec sep align={c|},fixed zerofill,precision=2,column type/.add={}{|},column name={Rate}},
columns/4/.style={dec sep align={c|},fixed zerofill,precision=2,column type/.add={}{|},column name={Rate}},
columns/6/.style={dec sep align={c|},fixed zerofill,precision=2,column type/.add={}{|},column name={Rate}}
]
{#1}
}
\begin{document}

\title{A Conservative Finite Element Method for the Incompressible Euler Equations with Variable Density}

\author{Evan S. Gawlik\thanks{\noindent Department of Mathematics,  University of Hawaii at Manoa, \href{egawlik@hawaii.edu}{egawlik@hawaii.edu}} \; and \; Fran\c{c}ois Gay-Balmaz\thanks{\noindent CNRS - LMD, Ecole Normale Sup\'erieure, \href{francois.gay-balmaz@lmd.ens.fr}{francois.gay-balmaz@lmd.ens.fr}}}

\date{}

\maketitle

\begin{abstract}
We construct a finite element discretization and time-stepping scheme for the incompressible Euler equations with variable density that exactly preserves total mass, total squared density, total energy, and pointwise incompressibility.  The method uses Raviart-Thomas or Brezzi-Douglas-Marini finite elements to approximate the velocity and discontinuous polynomials to approximate the density and pressure.  To achieve exact preservation of the aforementioned conserved quantities, we exploit a seldom-used weak formulation of the momentum equation and a second-order time-stepping scheme that is similar, but not identical, to the midpoint rule.  We also describe and prove stability of an upwinded version of the method.  We present numerical examples that demonstrate the order of convergence of the method.
\end{abstract}

\section{Introduction}

This paper considers the incompresible Euler equations with variable density on a bounded domain $\Omega \subset \mathbb{R}^n$, $n \in \{2,3\}$:
\begin{align}
\rho(\partial_t u + u \cdot \nabla u) &= -\nabla p, & \text{ in } \Omega \times (0,T), \label{velocity0} \\
\partial_t \rho + \dv (\rho u) &= 0, & \text{ in } \Omega \times (0,T), \label{density0} \\
\dv u &= 0, & \text{ in } \Omega \times (0,T), \label{incompressible0} \\
u \cdot n &= 0, & \text{ on } \partial\Omega \times (0,T), \label{BC} \\
u(0) = u_0, \, \rho(0) &= \rho_0, & \text{ in } \Omega. \label{IC}
\end{align}
The aim of this paper is to construct a finite element method for~(\ref{velocity0}-\ref{incompressible0}) that exactly conserves
\begin{equation} \label{conservationlaws}
\int_\Omega \rho \, dx,  \quad \int_\Omega \rho^2 \, dx, \quad \int_\Omega \rho u \cdot u \, dx,
\end{equation}
and the incompressibility constraint $\dv u = 0$ at the (spatially and temporally) discrete level.

Several authors have considered related tasks in the past.  Guermond and Quartapelle~\cite{guermond2000projection} have constructed a spatial discretization of the incompressible Navier-Stokes equations that preserves the three invariants~(\ref{conservationlaws}) in the limit of vanishing viscosity.  However, their temporal discretization does not preserve total energy nor total squared density.  Moreover, the incompressibility constraint is only satisfied in a weak sense, and their construction relies on an evolution equation for $\sqrt{\rho}u$ -- a quantity that lacks physical meaning.  
Another method that is closely related to ours is the $H(\dv)$-conforming finite element spatial discretization of the incompressible Euler equations with constant density studied in~\cite{guzman2016h,natale2017variational}.  When $\rho \equiv 1$, our spatial discretization reduces to the one used there, and our temporal discretization reduces to the midpoint rule used in~\cite{natale2017variational}.  Lastly, the form of the momentum equation that we discretize in our method bears some resemblance to the one used in~\cite{lee2018discrete} for the compressible rotating shallow water equations, but with $\partial_t (\rho u)$ appearing rather than $\partial_t u$.

To motivate our numerical method, we first use the identity
\[
\rho u \cdot \nabla u = \nabla(\rho u \cdot u) - u \times (\nabla \times (\rho u)) - (u \cdot \nabla \rho) u - \frac{1}{2} \rho \nabla ( u \cdot u)
\]
and equations~(\ref{density0}-\ref{incompressible0}) to write~(\ref{velocity0}) in the form
\begin{align}
\partial_t (\rho u) + \nabla (\rho u \cdot u) - u \times (\nabla \times (\rho u)) - \frac{1}{2}\rho \nabla(u \cdot u) &= -\nabla p, \label{velocity1}
\end{align}
Next, we multiply~(\ref{velocity1}),~(\ref{density0}), and~(\ref{incompressible0}) by test functions $v$, $\sigma$, and $q$, respectively, and integrate by parts.  Using the identity
\begin{align*}
\int_\Omega \left( \nabla (w \cdot u) - u \times (\nabla \times w) \right) \cdot v \, dx = \int_\Omega w \cdot (v \cdot \nabla u - u \cdot \nabla v) \, dx, \quad \text{ if } \dv u = 0, & \\
\text{ and } \left.u \cdot n\right|_{\partial\Omega} = 0, &
\end{align*}
we arrive at the following observation.  For every smooth vector field $v$ satisfying $\left. v \cdot n \right|_{\partial\Omega} = 0$ and every pair of smooth scalar fields $\sigma$ and $q$, the solution $(u,\rho,p)$ of~(\ref{velocity0}-\ref{IC}) satisfies
\begin{align}
\langle \partial_t(\rho u), v \rangle + a(\rho u, u, v) - \frac{1}{2} b(v,u \cdot u,\rho) &= \langle p, \dv v \rangle, \label{velocity} \\
\langle \partial_t \rho, \sigma \rangle - b(u,\sigma,\rho) &= 0, \label{density} \\
\langle \dv u, q \rangle &= 0, \label{incompressible}
\end{align}
where $\langle u, v \rangle = \int_\Omega u \cdot v \, dx$ for vector fields $u$ and $v$, $\langle f,g \rangle = \int_\Omega f g \, dx$ for scalar fields $f$ and $g$, and
\begin{align*}
a(w,u,v) &= \langle w, v \cdot \nabla u - u \cdot \nabla v \rangle, \\
b(w,f,g) &= \langle w \cdot \nabla f, g \rangle.
\end{align*}
Note that~(\ref{velocity}-\ref{incompressible}) can alternatively be derived from Hamilton's principle of least action; we refer the reader to~\cite{gawlik2019variational} for details.

One advantage of the formulation~(\ref{velocity}-\ref{incompressible}) is that the conserved quantities~(\ref{conservationlaws}) are straightforward consequences of two basic properties of the trilinear forms $a$ and $b$.  Namely, $a$ is alternating in its last two arguments,
\begin{equation} \label{aalternating}
a(w,u,v) = -a(w,v,u), 
\end{equation}
and $b$ is alternating in its last two arguments when its first argument is divergence-free:
\begin{align} \label{balternating}
b(w,f,g) &= -b(w,g,f) \text{ if } \dv w = 0 \text{ and } \left. w \cdot n \right|_{\partial\Omega} = 0.
\end{align}

With these properties in mind, we take $\sigma = 1$ in the density equation~(\ref{density}) to deduce conservation of total mass:
\[
\frac{d}{dt} \int_\Omega \rho \, dx = \langle \partial_t \rho, 1 \rangle = b(u,1,\rho) = 0.
\]
If instead we take $\sigma = \rho$ in~(\ref{density}) and use~(\ref{balternating}), we deduce conservation of total squared density:
\[
\frac{d}{dt} \frac{1}{2} \int_\Omega \rho^2 \, dx = \langle \partial_t \rho, \rho \rangle = b(u,\rho,\rho) = 0.
\]
Finally, taking $v=u$ in the momentum equation~(\ref{velocity}) gives conservation of total energy:
\begin{align*}
\frac{d}{dt} \frac{1}{2} \int_\Omega \rho u \cdot u \, dx 
&= \langle \partial_t(\rho u), u \rangle - \frac{1}{2} \langle \partial_t \rho, u \cdot u \rangle \\
&= \langle p, \dv u \rangle - a(\rho u, u, u) + \frac{1}{2}b(u, u \cdot u, \rho) - \frac{1}{2} \langle \partial_t \rho, u \cdot u \rangle \\
&= 0.
\end{align*}
Here, we have used the fact that $\dv u = 0$, $a$ is alternating in its last two arguments, and~(\ref{density}) holds.  Our numerical method will inherit these conservation laws by using discretizations of $a$ and $b$ that satisfy analogues of~(\ref{aalternating}) and~(\ref{balternating}).

\section{Spatial Discretization}

In this section, we propose a finite element spatial discretization of~(\ref{velocity}-\ref{incompressible}). 

Let $\mathcal{T}_h$ be a triangulation of $\Omega$.  We denote by $\mathcal{E}_h$ the set of interior $(n-1)$-dimensional faces in $\mathcal{T}_h$ (edges in two dimensions). For each integer $s \ge 0$ and each simplex $K \in \mathcal{T}_h$, we denote by $P_s(K)$ the space of polynomials of degree at most $s$ on $K$.  We denote
\[
H_0(\dv,\Omega) = \{u \in L^2(\Omega)^n \mid \dv u \in L^2(\Omega), \, u \cdot n = 0 \text{ on } \partial\Omega \}
\]
and
\[
\mathring{H}(\dv,\Omega) = \{u \in H_0(\dv,\Omega) \mid \dv u = 0\}.
\]

Our numerical method will make use of three approximation spaces: a space $U_h \subset H_0(\dv,\Omega)$ for the velocity $u$, a space $F_h \subset L^2(\Omega)$ for the density $\rho$, and a space $Q_h \subset L^2(\Omega)$ for the pressure $p$.  For the velocity, we use either the Raviart-Thomas space
\[
RT_s(\mathcal{T}_h) = \{ u \in H_0(\dv,\Omega) \mid \left. u \right|_K \in P_s(K)^n + x P_s(K), \, \forall K \in \mathcal{T}_h \} 
\]
or the Brezzi-Douglas-Marini space
\[
BDM_{s+1}(\mathcal{T}_h) = \{ u \in H_0(\dv,\Omega) \mid \left. u \right|_K \in P_{s+1}(K)^n, \, \forall K \in \mathcal{T}_h \},
\]
where $s \ge 0$ is an integer.  For the pressure, we use the discontinuous Galerkin space
\[
DG_s(\mathcal{T}_h) = \{ f \in L^2(\Omega) \mid \left. f \right|_K \in P_s(K), \, \forall K \in \mathcal{T}_h \}.
\]
For the density, we use $DG_m(\mathcal{T}_h)$, where $m \ge 0$ is an integer (not necessarily equal to $s$).  In summary,
\begin{align}
U_h &\in \{ RT_s(\mathcal{T}_h), BDM_{s+1}(\mathcal{T}_h) \}, \label{FEvelocity} \\
F_h &= DG_m(\mathcal{T}_h), \label{FEdensity} \\
Q_h &= DG_s(\mathcal{T}_h). \label{FEpressure}
\end{align}

We also denote by $W_h$ the (infinite-dimensional) space of vector fields on $\Omega$ that are piecewise smooth with respect to $\mathcal{T}_h$.

On an edge $e = K_1 \cap K_2 \in \mathcal{E}_h$, we denote the jump and average of a scalar function $f \in F_h$ by
\[
\llbracket f \rrbracket = f_1 n_1 + f_2 n_2, \quad \{f\} = \frac{f_1+f_2}{2},
\] 
where $f_i = \left. f \right|_{K_i}$, $n_1$ is the normal vector to $e$ pointing from $K_1$ to $K_2$, and similarly for $n_2$.  To define the jump and average of a vector field $u \in W_h$, we fix a choice of normal vector $n$ for each edge $e \in \mathcal{E}_h$.  Next, we determine $K_1$ and $K_2$ so that $e = K_1 \cap K_2$ and $n$ points from $K_1$ to $K_2$, and we define
\[
\llbracket u \rrbracket = u_1-u_2, \quad \{u\} = \frac{u_1+u_2}{2}.
\]

We define trilinear forms $a_h : W_h \times U_h \times U_h \rightarrow \mathbb{R}$ and $b_h : U_h \times F_h \times F_h \rightarrow \mathbb{R}$ by
\begin{align*}
%a_h(w,u,v) &= \sum_{K \in \mathcal{T}_h} \int_K w \cdot (v \cdot \nabla u - u \cdot \nabla v) \, dx + \sum_{e \in \mathcal{E}_h} \int_e (v \cdot n \llbracket u \rrbracket - u \cdot n \llbracket v \rrbracket) \cdot \{w\} \, ds, \\
a_h(w,u,v) &= \sum_{K \in \mathcal{T}_h} \int_K w \cdot (v \cdot \nabla u - u \cdot \nabla v) \, dx + \sum_{e \in \mathcal{E}_h} \int_e (n \times \{w\}) \cdot \llbracket u \times v \rrbracket \, ds, \\
b_h(u,f,g) &= \sum_{K \in \mathcal{T}_h} \int_K (u \cdot \nabla f) g \, dx - \sum_{e \in \mathcal{E}_h} \int_e u \cdot \llbracket f \rrbracket \{g\} \, ds,
\end{align*}
The trilinear forms $a_h$ and $b_h$ are well-known: $a_h$ has been used in discretizations the incompressible Euler equations with constant density~\cite{guzman2016h,natale2017variational}, and $b_h$ is a standard discontinous Galerkin discretization of the scalar advection operator~\cite{brezzi2004discontinuous}.  

These trilinear forms possess several properties that play an essential role in the conservative nature of our numerical method.  First, $a_h$ is alternating in its last two arguments:
\[
a_h(w,u,v) = -a_h(w,v,u), \quad \forall (w,u,v) \in W_h \times U_h \times U_h.
\]
Second, using integration by parts, one checks that $b_h$ is also alternating in its last two arguments if its first argument is divergence-free:
\begin{equation} \label{bhplusminus}
b_h(u,f,g) = -b_h(u,g,f), \quad \forall (u,f,g) \in (U_h \cap \mathring{H}(\dv,\Omega)) \times F_h \times F_h.
\end{equation}
As a consequence, we have
\begin{align}
a_h(w,u,u) &= 0, &\forall (w,u) \in W_h \times U_h, \label{auu} \\
b_h(u,f,f) &= 0,  \quad &\forall (u,f) \in (U_h \cap \mathring{H}(\dv,\Omega)) \times F_h. \label{bff} 
\end{align}

We are now ready to define our semidiscrete numerical method.  We first describe the simplest version of the method, which includes no upwinding and exactly preserves total mass, total squared density, total energy, and pointwise incompressibility.  It seeks $u_h(t) \in U_h$, $\rho_h(t) \in F_h$, and $p_h(t) \in Q_h$ such that
\begin{align}
\langle \partial_t (\rho_h u_h), v_h \rangle + a_h(\rho_h u_h, u_h, v_h) - \frac{1}{2} b_h(v_h, \overline{u_h \cdot u_h}, \rho_h) &= \langle p_h, \dv v_h \rangle, & \forall v_h \in U_h, \label{velocityh_cons} \\
\langle \partial_t \rho_h, \sigma_h \rangle - b_h(u_h, \sigma_h, \rho_h) &= 0, & \forall \sigma_h \in F_h, \label{densityh_cons} \\
\langle \dv u_h, q_h \rangle &= 0, &\forall q_h \in Q_h. \label{incompressibleh_cons}
\end{align}

Here, $\overline{f} \in F_h$ denotes the $L^2$-orthogonal projection of $f \in L^2(\Omega)$ onto $F_h$:
\[
\langle \overline{f}, g_h \rangle = \langle f, g_h \rangle, \quad \forall g_h \in F_h.
\]
%Equivalently,
%\[
%\int_K \overline{f} g_h \, dx = \int_K f g_h \, dx, \quad \forall g_h \in P_m(K), \, \forall K \in \mathcal{T}_h.
%\]
\begin{remark} \label{remark:uu}
If either
\begin{equation} \label{ordercondition}
( m \ge 2s \text{ and } U_h = RT_s(\mathcal{T}_h) ), \quad  \text{ or } \quad (m \ge 2s+2 \text{ and } U_h = BDM_{s+1}(\mathcal{T}_h) ),
\end{equation}
then $\overline{u_h \cdot u_h} = u_h \cdot u_h$.   Indeed, $u_h$ belongs to the divergence-free subspace of $U_h \in \{RT_s(\mathcal{T}_h),BDM_{s+1}(\mathcal{T}_h)\}$, which consists of piecewise polynomial vector fields of degree at most $s$ (if $U_h = RT_s(\mathcal{T}_h)$) or $s+1$ (if $U_h = BDM_{s+1}(\mathcal{T}_h)$)~\cite[p. 116]{brezzi1991mixed}.  Thus, $\left. u_h \cdot u_h \right|_K \in P_m(K)$ for every $K \in \mathcal{T}_h$ if~(\ref{ordercondition}) holds.  In particular, $\overline{u_h \cdot u_h} = u_h \cdot u_h$ in the lowest-order version of this finite element method ($s=m=0$ and $U_h=RT_0(\mathcal{T}_h)$).  If the condition~(\ref{ordercondition}) is violated, the computation of $\overline{u_h \cdot u_h}$ is not a significant expense since it can be computed element by element.  Note that an analogous projection of the squared fluid velocity appears in other conservative numerical methods for fluids; see, for instance,~\cite[p. 14]{lee2018discrete}.
\end{remark}

As with most finite element methods for advection-dominated problems, solutions of~(\ref{velocityh_cons}-\ref{incompressibleh_cons}) can often exhibit unphysical oscillations if upwinding is not incorporated into the discretization~\cite{ern2004theory}.  If upwinding is desired, we propose the following generalization of~(\ref{velocityh_cons}-\ref{incompressibleh_cons}):   Seek $u_h(t) \in U_h$, $\rho_h(t) \in F_h$, and $p_h(t) \in Q_h$ such that
\begin{align}
\sum_{e \in \mathcal{E}_h} \int_e \Big( \alpha_e(u_h) (n \times \llbracket \rho_h u_h \rrbracket ) \cdot \llbracket u_h \times v_h \rrbracket + \frac{\beta_e(u_h)}{2|u_h \cdot n|^2} (u_h \cdot n)(v_h \cdot n) \llbracket \overline{u_h \cdot u_h}  \rrbracket \cdot & \llbracket \rho_h\rrbracket  \Big) \, ds   \nonumber \\ + \langle \partial_t (\rho_h u_h), v_h \rangle + a_h(\rho_h u_h, u_h, v_h) - \frac{1}{2} b_h(v_h, \overline{u_h \cdot u_h}, \rho_h) - \langle p_h, \dv v_h \rangle  &= 0, \quad\forall v_h \in U_h, \label{velocityh} \\
\langle \partial_t \rho_h, \sigma_h \rangle - b_h(u_h, \sigma_h, \rho_h) + \sum_{e \in \mathcal{E}_h} \int_e \beta_e(u_h) \llbracket \sigma_h \rrbracket \cdot \llbracket \rho_h \rrbracket \, ds
 &= 0, \quad\forall \sigma_h \in F_h, \label{densityh} \\
\langle \dv u_h, q_h \rangle &= 0, \quad\forall q_h \in Q_h. \label{incompressibleh}
\end{align}
Here, $\{\alpha_e\}_{e \in \mathcal{E}_h}$ and $\{\beta_e\}_{e \in \mathcal{E}_h}$ are nonnegative parameters which may depend on $u_h$.  Standard choices for $\alpha_e$ and $\beta_e$ are~\cite{brezzi2004discontinuous,natale2017variational}
\[
\alpha_e(u_h) = c_1 \frac{u_h \cdot n}{|u_h \cdot n|}, \quad \beta_e(u_h) = c_2 |u_h \cdot n|, \quad c_1,c_2 \in \left[0,\tfrac{1}{2}\right],
\]
where the choice $c_1=c_2=\frac{1}{2}$ corresponds to full upwinding.
To understand why, observe that if $\beta_e(u_h) = \frac{1}{2} |u_h \cdot n|$, then 
\[
\left( u_h \{ \rho_h \}  + \beta_e(u_h) \llbracket \rho_h \rrbracket \right) \cdot n_1
= \begin{cases}
\rho_{h1} u_h \cdot n_1 &\mbox{ if } u_h \cdot n_1 > 0, \\
\rho_{h2} u_h \cdot n_1 &\mbox{ if } u_h \cdot n_1 < 0,
\end{cases}
\]
so the upwinding in~(\ref{densityh}) has the effect of replacing $\{\rho_h\}$ with its upwind value (either $\rho_{h1}$ or $\rho_{h2}$) in the expression~(\ref{bhplusminus}) for $b_h(u_h,\sigma_h,\rho_h)$.  The effect of choosing $\alpha_e(u_h) = \frac{1}{2} \frac{u_h \cdot n}{|u_h \cdot n|}$ is similar.  Note that an additional term involving $\beta_e$ has been added to the momentum equation~(\ref{velocityh}) to counteract any energy imbalance introduced by the terms involving $\beta_e$ in the density equation~(\ref{densityh}).  When $\rho_h$ is constant, the terms involving $\beta_e$ vanish, and we recover the momentum upwinding strategy proposed in~\cite{natale2017variational}.

\subsection{Properties of the Spatial Discretization} \label{sec:properties_spatial}

Although~(\ref{incompressibleh}) imposes incompressibility weakly, our choice of finite element spaces~(\ref{FEvelocity}-\ref{FEpressure}) ensures that the velocity field determined by~(\ref{velocityh}-\ref{incompressibleh}) is divergence-free pointwise.
\begin{proposition}
For every $t$, $\dv u_h \equiv 0$.
\end{proposition}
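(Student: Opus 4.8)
The plan is to exploit the compatibility between the velocity and pressure spaces, namely the fact that the divergence of any field in $U_h$ already lies in $Q_h$. First I would verify the inclusion $\dv U_h \subseteq Q_h = DG_s(\mathcal{T}_h)$ for both admissible choices of $U_h$ in~(\ref{FEvelocity}). If $U_h = BDM_{s+1}(\mathcal{T}_h)$, then $\left. u_h \right|_K \in P_{s+1}(K)^n$, so $\left. \dv u_h \right|_K \in P_s(K)$ and the inclusion is immediate. If $U_h = RT_s(\mathcal{T}_h)$, then $\left. u_h \right|_K \in P_s(K)^n + x P_s(K)$; since $\dv(x f) = n f + x \cdot \nabla f \in P_s(K)$ whenever $f \in P_s(K)$, one again obtains $\left. \dv u_h \right|_K \in P_s(K)$. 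In both cases $\dv u_h \in DG_s(\mathcal{T}_h) = Q_h$.

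With this inclusion in hand, the second step is simply to take the test function $q_h = \dv u_h \in Q_h$ in the weak incompressibility constraint~(\ref{incompressibleh}). This yields
\[
\langle \dv u_h, \dv u_h \rangle = \| \dv u_h \|_{L^2(\Omega)}^2 = 0,
\]
so that $\dv u_h = 0$. Because~(\ref{incompressibleh}) is imposed at each time $t$, the argument applies verbatim for every $t$, giving $\dv u_h \equiv 0$ for all $t$ rather than only at the initial instant.

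I expect essentially no obstacle here beyond recording the standard inclusion $\dv U_h \subseteq Q_h$, which is precisely the property for which the pair $(U_h, Q_h)$ was selected in~(\ref{FEvelocity})--(\ref{FEpressure}); the remainder is a one-line energy-type argument. The only point meriting a moment's care is the observation that the constraint holds pointwise in time, so that the conclusion is valid uniformly in $t$.
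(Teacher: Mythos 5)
Your proof is correct and takes essentially the same route as the paper's: both rest on the inclusion $\dv U_h \subseteq Q_h = DG_s(\mathcal{T}_h)$ and then test the weak constraint with $q_h = \dv u_h$ to conclude $\|\dv u_h\|_{L^2(\Omega)}^2 = 0$. The only difference is that you spell out the verification of the inclusion for the $RT_s$ and $BDM_{s+1}$ cases, which the paper leaves implicit.
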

\begin{proof}
Since $U_h \in \{RT_s(\mathcal{T}_h), BDM_{s+1}(\mathcal{T}_h)\}$, we have $\dv u_h \in DG_s(\mathcal{T}_h) = Q_h$, so we may take $q_h = \dv u_h$ in~(\ref{incompressibleh}).
\end{proof}

The following proposition shows that the numerical method~(\ref{velocityh}-\ref{incompressibleh}) exactly preserves total mass, total energy, and, if $\beta_e=0$, total squared density.
\begin{proposition} \label{prop:conservation}
For every $t$, we have
\begin{align}
\frac{d}{dt} \int_\Omega \rho_h \, dx &= 0, \label{rhoint} \\
\frac{d}{dt} \int_\Omega \rho_h^2 \, dx &\le 0, \text{ with equality if } \beta_e = 0, \, \forall e \in \mathcal{E}_h, \label{rho2int} \\
\frac{d}{dt} \int_\Omega \rho_h u_h \cdot u_h \, dx &= 0. \label{energy}
\end{align}
\end{proposition}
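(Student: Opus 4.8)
The plan is to mimic, at the discrete level, the three continuous derivations carried out in the introduction, testing each of the discrete equations~(\ref{velocityh}-\ref{incompressibleh}) against a carefully chosen function and invoking the algebraic properties~(\ref{auu}),~(\ref{bff}), and~(\ref{bhplusminus}) together with the fact (proved in the preceding proposition) that $\dv u_h \equiv 0$.

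For~(\ref{rhoint}) I would take $\sigma_h = 1$ in the density equation~(\ref{densityh}). Since $\nabla 1 = 0$ and $\llbracket 1 \rrbracket = 0$, both $b_h(u_h, 1, \rho_h)$ and the $\beta_e$ upwinding term vanish, leaving $\frac{d}{dt}\int_\Omega \rho_h \, dx = \langle \partial_t \rho_h, 1\rangle = 0$. For~(\ref{rho2int}) I would take $\sigma_h = \rho_h$ in~(\ref{densityh}). The volume term $b_h(u_h,\rho_h,\rho_h)$ vanishes by~(\ref{bff}), while the upwinding contribution becomes $\sum_{e} \int_e \beta_e \, \llbracket \rho_h \rrbracket \cdot \llbracket \rho_h \rrbracket \, ds \ge 0$ because $\beta_e \ge 0$. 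Hence $\frac{d}{dt}\frac{1}{2}\int_\Omega \rho_h^2\,dx = -\sum_e \int_e \beta_e \, \llbracket \rho_h \rrbracket \cdot \llbracket \rho_h\rrbracket \,ds \le 0$, with equality precisely when every $\beta_e$ vanishes.

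The energy identity~(\ref{energy}) is the substantive case. I would begin from the product-rule identity
\[
\frac{d}{dt}\,\frac{1}{2}\int_\Omega \rho_h u_h \cdot u_h\,dx = \langle \partial_t(\rho_h u_h), u_h\rangle - \frac{1}{2}\langle \partial_t \rho_h, u_h \cdot u_h\rangle,
\]
and evaluate the two terms separately. For the first, I would set $v_h = u_h$ in the momentum equation~(\ref{velocityh}): then $a_h(\rho_h u_h, u_h, u_h) = 0$ by~(\ref{auu}), the pressure term $\langle p_h, \dv u_h\rangle$ vanishes since $\dv u_h \equiv 0$, and the $\alpha_e$ upwinding term drops out because $\llbracket u_h \times u_h\rrbracket = 0$ pointwise. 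What survives is $\frac{1}{2} b_h(u_h, \overline{u_h \cdot u_h}, \rho_h)$ together with a single $\beta_e$ contribution equal to $-\tfrac{1}{2}\sum_e \int_e \beta_e \, \llbracket \overline{u_h\cdot u_h}\rrbracket \cdot \llbracket \rho_h\rrbracket\,ds$ (the prefactor $\tfrac{\beta_e}{2|u_h\cdot n|^2}(u_h\cdot n)^2$ collapsing to $\tfrac{\beta_e}{2}$). For the second term, the key observation is that $\partial_t \rho_h \in F_h$, so by the defining property of the $L^2$-projection we may replace $u_h \cdot u_h$ by $\overline{u_h \cdot u_h}$, that is, $\langle \partial_t \rho_h, u_h \cdot u_h\rangle = \langle \partial_t \rho_h, \overline{u_h \cdot u_h}\rangle$. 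Testing~(\ref{densityh}) against $\sigma_h = \overline{u_h \cdot u_h} \in F_h$ then expresses this in terms of $b_h(u_h, \overline{u_h\cdot u_h}, \rho_h)$ and a matching $\beta_e$ boundary integral.

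Subtracting the two contributions, the $b_h(u_h, \overline{u_h\cdot u_h}, \rho_h)$ terms cancel exactly, and so do the two $\beta_e$ boundary integrals; this is precisely the reason the extra $\beta_e$ term was inserted into~(\ref{velocityh}). The main difficulty is thus not analytic but one of careful bookkeeping: one must check that the $\beta_e$ term in the momentum equation carries exactly the coefficient needed to annihilate the $\beta_e$ term produced by the density equation, which in turn hinges on using the projected quantity $\overline{u_h\cdot u_h}$ consistently in both equations and on the identity $\langle \partial_t \rho_h, u_h \cdot u_h\rangle = \langle \partial_t \rho_h, \overline{u_h\cdot u_h}\rangle$. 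These cancellations yield $\frac{d}{dt}\frac{1}{2}\int_\Omega \rho_h u_h \cdot u_h\,dx = 0$, completing the proof.
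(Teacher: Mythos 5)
Your proposal is correct and follows essentially the same route as the paper's proof: the same test functions ($\sigma_h = 1$, $\sigma_h = \rho_h$, $v_h = u_h$, and $\sigma_h = \overline{u_h \cdot u_h}$), the same use of the $L^2$-projection identity $\langle \partial_t \rho_h, u_h \cdot u_h\rangle = \langle \partial_t \rho_h, \overline{u_h \cdot u_h}\rangle$, and the same cancellation between the $\beta_e$ terms of the momentum and density equations. You are in fact slightly more explicit than the paper about why the $\alpha_e$ term drops out ($\llbracket u_h \times u_h \rrbracket = 0$) and why the $\beta_e$ prefactor collapses to $\tfrac{\beta_e}{2}$.
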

\begin{proof}
Taking $\sigma_h \equiv 1$ in~(\ref{densityh}) gives
\[
\frac{d}{dt} \int_\Omega \rho_h \, dx = \langle \partial_t \rho_h, 1 \rangle = b_h(u_h, 1, \rho_h) - \sum_{e \in \mathcal{E}_h} \int_e \beta_e(u_h) \llbracket 1  \rrbracket \cdot \llbracket \rho_h \rrbracket \, ds = 0.
\]
Taking $\sigma_h = \rho_h$ in~(\ref{densityh}) and invoking~(\ref{bff}) gives
\[
\frac{d}{dt} \frac{1}{2} \int_\Omega \rho_h^2 \, dx = \langle \partial_t \rho_h, \rho_h \rangle = b_h(u_h, \rho_h, \rho_h)  -\sum_{e \in \mathcal{E}_h} \int_e \beta_e(u_h) \llbracket \rho_h \rrbracket \cdot \llbracket \rho_h \rrbracket \, ds \le 0.
\]
To prove~(\ref{energy}), we take $v_h=u_h$ in~(\ref{velocityh}) and invoke the definition of the $L^2$-projection to write
\begin{align*}
\frac{d}{dt}  &\frac{1}{2} \int_\Omega \rho_h u_h \cdot u_h \, dx \\&
= \langle \partial_t(\rho_h u_h), u_h \rangle - \frac{1}{2} \langle \partial_t \rho_h, u_h \cdot u_h \rangle \\
&= \langle p_h, \dv u_h \rangle - a_h(\rho_h u_h, u_h, u_h) + \frac{1}{2} b_h(u_h, \overline{u_h \cdot u_h}, \rho_h) - \sum_{e \in \mathcal{E}_h} \int_e \frac{1}{2}\beta_e(u_h) \llbracket \overline{u_h \cdot u_h}  \rrbracket \cdot \llbracket \rho_h\rrbracket \, ds \\&\quad - \frac{1}{2} \langle \partial_t \rho_h, \overline{u_h \cdot u_h} \rangle.
\end{align*}
The first two terms above vanish by~(\ref{incompressibleh}) and~(\ref{auu}).  The last three terms vanish according to the density evolution equation~(\ref{densityh}).  It follows that~(\ref{energy}) holds.
\end{proof}

\section{Temporal Discretization}

We now propose a temporal discretization of~(\ref{velocityh}-\ref{incompressibleh}).  To reduce notational clutter, we supress the subscript $h$ when referring to functions that belong to finite element spaces in this section.  We also suppress the subscript $h$ on $a_h$ and $b_h$.

Our temporal discretization seeks $u_1,u_2,\ldots \in U_h$, $\rho_1,\rho_2,\ldots \in F_h$, and $p_1,p_2,\ldots \in Q_h$ such that for every $k$ and every $(v,\sigma,q) \in U_h \times F_h \times Q_h$,
\begin{align}
&\sum_{e \in \mathcal{E}_h} \int_e \alpha_e(u_{k+1/2}) (n \times \llbracket (\rho u)_{k+1/2} \rrbracket ) \cdot \llbracket u_{k+1/2} \times v \rrbracket \, ds &\nonumber\\
&+ \sum_{e \in \mathcal{E}_h} \int_e \frac{\beta_e(u_{k+1/2})}{2|u_{k+1/2} \cdot n|^2} (u_{k+1/2} \cdot n)(v \cdot n) \llbracket \overline{u_k \cdot u_{k+1}}  \rrbracket \cdot \llbracket \rho_{k+1/2} \rrbracket \, ds &
 \label{velocitydtstab}\\ 
&+ \left\langle \frac{ \rho_{k+1} u_{k+1}-\rho_k u_k }{ \Delta t }, v \right\rangle + a( (\rho u)_{k+1/2}, u_{k+1/2}, v ) - \frac{1}{2}b\left( v, \overline{ u_k \cdot u_{k+1} }, \rho_{k+1/2} \right) - \langle p_{k+1}, \dv v \rangle = 0, \nonumber \\
&\left\langle \frac{\rho_{k+1}-\rho_k}{\Delta t}, \sigma \right\rangle - b( u_{k+1/2}, \sigma, \rho_{k+1/2} ) + \sum_{e \in \mathcal{E}_h} \int_e \beta_e(u_{k+1/2}) \llbracket \sigma \rrbracket \cdot \llbracket \rho_{k+1/2} \rrbracket \, ds = 0, \label{densitydtstab} \\
&\langle \dv u_{k+1}, q \rangle = 0, \label{incompressibledtstab}
\end{align}
where $\Delta t > 0$ is a time step, $u_{k+1/2} = \frac{u_k+u_{k+1}}{2}$, $\rho_{k+1/2} = \frac{\rho_k+\rho_{k+1}}{2}$, and 
\begin{equation} \label{rhouhalf}
(\rho u)_{k+1/2} = \frac{\rho_k u_k + \rho_{k+1} u_{k+1}}{2}.
\end{equation}
In the case where $\alpha_e=\beta_e=0$ for every $e \in \mathcal{E}_h$, the scheme reduces to
\begin{align}
\left\langle \frac{ \rho_{k+1} u_{k+1}-\rho_k u_k }{ \Delta t }, v \right\rangle + a\left( (\rho u)_{k+1/2}, \frac{u_k+u_{k+1}}{2}, v \right) &&\nonumber\\ - \frac{1}{2}b\left( v, \overline{ u_k \cdot u_{k+1} }, \frac{\rho_k+\rho_{k+1}}{2} \right) - \langle p_{k+1}, \dv v \rangle &= 0, & \forall v \in U_h \label{velocitydt} \\
\left\langle \frac{\rho_{k+1}-\rho_k}{\Delta t}, \sigma \right\rangle - b\left( \frac{u_k+u_{k+1}}{2}, \sigma, \frac{\rho_k+\rho_{k+1}}{2} \right) &= 0, & \forall \sigma \in F_h \label{densitydt} \\
\langle \dv u_{k+1}, q \rangle &= 0, & \forall q \in Q_h. \label{incompressibledt}
\end{align}
\begin{remark} \label{remark:uudt}
Just as in Remark~\ref{remark:uu}, we have $\overline{u_k \cdot u_{k+1}} = u_k \cdot u_{k+1}$ if~(\ref{ordercondition}) holds, since then $\left. u_k \cdot u_{k+1} \right|_K \in P_m(K)$ for every $K \in \mathcal{T}_h$.
\end{remark}

\subsection{Properties of the Temporal Discretization}

By our choice of finite element spaces, we again have exact incompressibility of the discrete solution.
\begin{proposition}
For every $k$, $\dv u_k \equiv 0$.
\end{proposition}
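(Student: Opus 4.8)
The plan is to argue exactly as in the spatial analogue (the proposition immediately preceding Proposition~\ref{prop:conservation}), exploiting the compatibility $\dv U_h \subseteq Q_h$ of the chosen finite element spaces. The entire proof rests on this one fact: because $U_h \in \{RT_s(\mathcal{T}_h), BDM_{s+1}(\mathcal{T}_h)\}$, the divergence of any element of $U_h$ lands in $DG_s(\mathcal{T}_h) = Q_h$, so the weakly imposed incompressibility constraint can be legitimately tested against the divergence itself.

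Concretely, first I would fix $k \ge 1$ and invoke the discrete incompressibility equation~(\ref{incompressibledtstab}) (or~(\ref{incompressibledt}) in the case $\alpha_e = \beta_e = 0$), which asserts that $\langle \dv u_k, q \rangle = 0$ for every $q \in Q_h$. Since $u_k \in U_h$, we have $\dv u_k \in DG_s(\mathcal{T}_h) = Q_h$, so I would choose $q = \dv u_k$. This gives $\|\dv u_k\|_{L^2(\Omega)}^2 = \langle \dv u_k, \dv u_k \rangle = 0$, whence $\dv u_k \equiv 0$ pointwise.

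The one point deserving attention is the indexing. At step $k$ the time-stepping equations constrain only $u_{k+1}$, so they force $\dv u_k = 0$ for all $k \ge 1$ but say nothing about the initial velocity $u_0$. Pointwise incompressibility of $u_0$ is therefore not a consequence of the scheme itself but a condition we impose on the discrete initial data, namely $u_0 \in U_h \cap \mathring{H}(\dv,\Omega)$. With that convention in place, the claim holds for every $k$. I do not anticipate any genuine obstacle here: the argument is a one-line consequence of the space compatibility $\dv U_h \subseteq Q_h$, and the only subtlety is to state the initial-data assumption so that the base case $k=0$ is covered.
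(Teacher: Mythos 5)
Your proof is correct and is essentially identical to the paper's one-line argument: since $U_h \in \{RT_s(\mathcal{T}_h), BDM_{s+1}(\mathcal{T}_h)\}$ implies $\dv u_{k+1} \in DG_s(\mathcal{T}_h) = Q_h$, one may take $q = \dv u_{k+1}$ in~(\ref{incompressibledtstab}). Your added remark that the case $k=0$ requires the discrete initial velocity to be chosen divergence-free is a reasonable point of care that the paper leaves implicit.
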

\begin{proof}
Take $q = \dv u_{k+1}$ in~(\ref{incompressibledtstab}).
\end{proof}

The next proposition shows that the fully discrete scheme~(\ref{velocitydtstab}-\ref{incompressibledtstab}) enjoys the same conservative properties as the semidiscrete scheme~(\ref{velocityh}-\ref{incompressibleh}).  In particular, it is unconditionally stable for any $\alpha_e,\beta_e \ge 0$, and it exactly preserves total mass, total energy, and, if $\beta_e = 0$ for every $e \in \mathcal{E}_h$, total squared density.

\begin{proposition} \label{prop:conservationdt}
For every $k$, we have
\begin{align}
\int_\Omega \rho_{k+1} \, dx &= \int_\Omega \rho_k \, dx, \label{rhointdt} \\
\int_\Omega \rho_{k+1}^2 \, dx &\le \int_\Omega \rho_k^2 \, dx, \text{ with equality if } \beta_e = 0, \, \forall e \in \mathcal{E}_h, \label{rho2intdt} \\
\int_\Omega \rho_{k+1} u_{k+1} \cdot u_{k+1} \, dx &= \int_\Omega \rho_k u_k \cdot u_k \, dx. \label{energydt}
\end{align}
\end{proposition}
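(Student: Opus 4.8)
The plan is to follow the template of the semidiscrete Proposition~\ref{prop:conservation}, replacing the test functions $\sigma_h \equiv 1$, $\sigma_h = \rho_h$, and $v_h = u_h$ used there by their fully discrete analogues and replacing time derivatives by difference quotients. For mass, I take $\sigma \equiv 1$ in the density equation~(\ref{densitydtstab}). Then $\llbracket 1 \rrbracket = 0$ kills both the $\beta_e$ upwinding term and the edge contribution to $b$, while the element contribution to $b(u_{k+1/2},1,\rho_{k+1/2})$ vanishes because $\nabla 1 = 0$; hence $\langle \rho_{k+1}-\rho_k, 1\rangle = 0$, which is~(\ref{rhointdt}). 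For the squared density, I take $\sigma = \rho_{k+1/2}$ in~(\ref{densitydtstab}) and use the telescoping identity $\langle \rho_{k+1}-\rho_k, \rho_{k+1/2}\rangle = \tfrac12\big(\langle\rho_{k+1},\rho_{k+1}\rangle - \langle\rho_k,\rho_k\rangle\big)$ together with $b(u_{k+1/2},\rho_{k+1/2},\rho_{k+1/2}) = 0$, which holds by~(\ref{bff}) since $u_{k+1/2} = \tfrac12(u_k+u_{k+1})$ is divergence-free. What remains is $-\sum_{e \in \mathcal{E}_h}\int_e \beta_e\,\llbracket\rho_{k+1/2}\rrbracket \cdot \llbracket\rho_{k+1/2}\rrbracket\,ds \le 0$, giving~(\ref{rho2intdt}) with equality precisely when every $\beta_e$ vanishes.

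The energy identity~(\ref{energydt}) is the substantive part. First I record the purely algebraic, pointwise identity
\[
\rho_{k+1} u_{k+1}\cdot u_{k+1} - \rho_k u_k\cdot u_k = 2(\rho_{k+1}u_{k+1}-\rho_k u_k)\cdot u_{k+1/2} - (\rho_{k+1}-\rho_k)\,u_k\cdot u_{k+1},
\]
which one verifies by expanding both sides and using $u_k \cdot u_{k+1} = u_{k+1}\cdot u_k$. Integrating over $\Omega$ and using that $\rho_{k+1}-\rho_k \in F_h$ lets me replace $u_k\cdot u_{k+1}$ by its projection $\overline{u_k\cdot u_{k+1}}$ in the last inner product. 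Thus~(\ref{energydt}) reduces to showing
\[
2\Big\langle \tfrac{\rho_{k+1}u_{k+1}-\rho_k u_k}{\Delta t}, u_{k+1/2}\Big\rangle = \Big\langle \tfrac{\rho_{k+1}-\rho_k}{\Delta t}, \overline{u_k\cdot u_{k+1}}\Big\rangle .
\]

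To establish this, I take $v = u_{k+1/2}$ in the momentum equation~(\ref{velocitydtstab}) and $\sigma = \overline{u_k\cdot u_{k+1}}$ in the density equation~(\ref{densitydtstab}). In the momentum equation the term $a((\rho u)_{k+1/2},u_{k+1/2},u_{k+1/2})$ vanishes by~(\ref{auu}); the $\alpha_e$ upwinding term vanishes because $\llbracket u_{k+1/2}\times u_{k+1/2}\rrbracket = 0$; the pressure term vanishes because $\dv u_{k+1/2}=0$; and the coefficient of the $\beta_e$ term collapses to $\tfrac12\beta_e$ since $(u_{k+1/2}\cdot n)^2 = |u_{k+1/2}\cdot n|^2$. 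This leaves the time-difference term equal to $\tfrac12 b(u_{k+1/2},\overline{u_k\cdot u_{k+1}},\rho_{k+1/2})$ minus a $\beta_e$ edge sum. Solving the density equation for $b(u_{k+1/2},\overline{u_k\cdot u_{k+1}},\rho_{k+1/2})$ and substituting, the two $\beta_e$ edge sums cancel exactly — this cancellation is the whole reason the extra $\beta_e$ term was inserted into~(\ref{velocitydtstab}) — and the displayed identity follows. I expect the main obstacle to be bookkeeping this cancellation correctly: one must track the factor $\tfrac12$ on the $b$-term in~(\ref{velocitydtstab}), the collapse of the $\beta_e$ coefficient under $v=u_{k+1/2}$, and the role of the projection $\overline{(\cdot)}$, since the argument works only because $\rho_{k+1}-\rho_k \in F_h$ guarantees $\langle\rho_{k+1}-\rho_k,\overline{u_k\cdot u_{k+1}}\rangle = \langle\rho_{k+1}-\rho_k,u_k\cdot u_{k+1}\rangle$.
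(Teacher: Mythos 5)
Your proof is correct and follows essentially the same route as the paper: the same test functions $\sigma \equiv 1$, $\sigma = \rho_{k+1/2}$, $v = u_{k+1/2}$, $\sigma = \overline{u_k \cdot u_{k+1}}$, the same algebraic identity for the discrete energy difference, and the same cancellation of the two $\beta_e$ edge sums. The only cosmetic difference is that you spell out explicitly why the $\alpha_e$ term and the $\beta_e$ coefficient collapse under $v = u_{k+1/2}$, which the paper leaves implicit.
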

\begin{proof}
Taking $\sigma \equiv 1$ in~(\ref{densitydtstab}) gives
\[
\int_\Omega \frac{\rho_{k+1}-\rho_k}{\Delta t} \, dx = \left\langle \frac{\rho_{k+1}-\rho_k}{\Delta t}, 1 \right\rangle = b\left( \frac{u_k+u_{k+1}}{2}, 1, \frac{\rho_k+\rho_{k+1}}{2} \right) = 0.
\]
Taking $\sigma = \frac{\rho_k+\rho_{k+1}}{2} = \rho_{k+1/2}$ in~(\ref{densitydtstab}) and invoking~(\ref{bff}) gives
\begin{align*}
\frac{1}{2} \int_\Omega \frac{\rho_{k+1}^2-\rho_k^2}{\Delta t} \, dx
&= \left\langle \frac{\rho_{k+1}-\rho_k}{\Delta t}, \frac{\rho_k+\rho_{k+1}}{2} \right\rangle \\
&= b\left( \frac{u_k+u_{k+1}}{2}, \frac{\rho_k+\rho_{k+1}}{2}, \frac{\rho_k+\rho_{k+1}}{2} \right) -\sum_{e \in \mathcal{E}_h} \int_e \beta_e(u_{k+1/2}) \llbracket \rho_{k+1/2} \rrbracket \cdot \llbracket \rho_{k+1/2} \rrbracket \, ds \\
&= -\sum_{e \in \mathcal{E}_h} \int_e \beta_e(u_{k+1/2}) \llbracket \rho_{k+1/2} \rrbracket \cdot \llbracket \rho_{k+1/2} \rrbracket \, ds \\
&\le 0. 
\end{align*}
To prove~(\ref{energydt}), we first observe the identity
\[
\frac{1}{2} \int_\Omega \frac{\rho_{k+1} u_{k+1} \cdot u_{k+1} - \rho_k u_k \cdot u_k}{\Delta t} \, dx
= \left\langle \frac{\rho_{k+1} u_{k+1} - \rho_k u_k}{ \Delta t }, \frac{u_k+u_{k+1}}{2} \right\rangle - \frac{1}{2} \left\langle \frac{\rho_{k+1}-\rho_k}{\Delta t}, u_k \cdot u_{k+1} \right\rangle.
\]
Next, we invoke~(\ref{velocitydtstab}) and the definition of the $L^2$-projection to write
\begin{align*}
&\left\langle \frac{\rho_{k+1} u_{k+1} - \rho_k u_k}{ \Delta t }, \frac{u_k+u_{k+1}}{2} \right\rangle - \frac{1}{2} \left\langle \frac{\rho_{k+1}-\rho_k}{\Delta t}, u_k \cdot u_{k+1} \right\rangle \\
&= \left\langle p_{k+1}, \dv\left( \frac{u_k+u_{k+1}}{2} \right) \right\rangle - a\left( (\rho u)_{k+1/2}, \frac{u_k+u_{k+1}}{2}, \frac{u_k+u_{k+1}}{2} \right) \\ &\quad + \frac{1}{2} b\left( \frac{u_k+u_{k+1}}{2}, \overline{u_k \cdot u_{k+1}}, \frac{\rho_k+\rho_{k+1}}{2}\right) - \frac{1}{2} \left\langle \frac{\rho_{k+1}-\rho_k}{\Delta t}, \overline{u_k \cdot u_{k+1}} \right\rangle \\
&\quad- \frac{1}{2}\sum_{e \in \mathcal{E}_h} \beta_e(u_{k+1/2}) \llbracket \overline{u_k \cdot u_{k+1}} \rrbracket \cdot \llbracket \rho_{k+1/2} \rrbracket \, ds.
\end{align*}
The first two terms above vanish by~(\ref{incompressibledtstab}) and~(\ref{auu}).  The last three terms vanish according to the discrete density evolution equation~(\ref{densitydtstab}).  It follows that~(\ref{energydt}) holds.
\end{proof}

\begin{remark}
The definition~(\ref{rhouhalf}) plays no role in the proof of Proposition~\ref{prop:conservationdt}.  Thus, the conclusions~(\ref{rhointdt}-\ref{energydt}) remain valid if~(\ref{rhouhalf}) is replaced, for instance, by $(\rho u)_{k+1/2} = \left( \frac{\rho_k+\rho_{k+1}}{2}\right) \left(\frac{u_k+u_{k+1}}{2}\right)$, $(\rho u)_{k+1/2} = \rho_k u_k$ or $(\rho u)_{k+1/2} = \rho_{k+1} u_{k+1}$. 
\end{remark}

\section{Numerical Examples}

\begin{table}[t]
\centering
\maketable{variabledensity_a0b0.dat}
\caption{$L^2$-errors in the velocity, density, and pressure at time $T=0.5$ without upwinding.}
\label{tab:a0b0}
\end{table}

\begin{table}[t]
\centering
\maketable{variabledensity_a1b1.dat}
\caption{$L^2$-errors in the velocity, density, and pressure at time $T=0.5$ with upwinding.}
\label{tab:a1b1}
\end{table}

\begin{table}[t]
\centering
\maketabledt{variabledensity_a0b0dt.dat}
\caption{Convergence with respect to $\Delta t$ of the $L^2$-errors in the velocity, density, and pressure at time $T=0.5$ without upwinding.}
\label{tab:a0b0_dt}
\end{table}

\begin{table}[t]
\centering
\maketabledt{variabledensity_a1b1dt.dat}
\caption{Convergence with respect to $\Delta t$ of the $L^2$-errors in the velocity, density, and pressure at time $T=0.5$ with upwinding.}
\label{tab:a1b1_dt}
\end{table}

\subsection{Convergence Tests}

To test the performance of the numerical method~(\ref{velocitydtstab}-\ref{incompressibledtstab}), we used it to approximate the solution of~(\ref{velocity}-\ref{incompressible}) on the square $\Omega = (-1,1) \times (-1,1)$ with initial conditions 
\begin{align*}
u(x,y,0) &= (-\cos(\pi x/2) \sin(\pi y/2), \, \sin(\pi x/2) \cos(\pi y/2)), \\
\rho(x,y,0) &= 2+\sin(xy).
\end{align*}
We used the finite element spaces $U_h = RT_s(\mathcal{T}_h)$ and $F_h = Q_h = DG_s(\mathcal{T}_h)$ with $s \in \{0,1,2\}$ on a uniform triangulation $\mathcal{T}_h$ with maximum element diameter $h = 2^{-j}$, $j=0,1,2,3$.  We used a small time step $\Delta t = 0.00625$ to ensure that temporal discretization errors were negligible, and we measured the $L^2$-error in the computed solution $(u_h,\rho_h,p_h)$ at time $T=0.5$.  The ``exact'' solution was obtained with $s=2$, $h=2^{-5}$, and $\Delta t = 0.00625$.  Tables~\ref{tab:a0b0}-\ref{tab:a1b1} show the results for two choices of the parameters $\alpha_e$, $\beta_e$: $\alpha_e=\beta_e=0$ (no upwinding) and $\alpha_e(u) = \frac{u \cdot n}{2|u \cdot n|}$, $\beta_e(u) = \frac{1}{2}|u \cdot n|$ (upwinding).   When using upwinding, the errors converged at the optimal rates 1, 2, and 3, respectively, for polynomial degrees $s=0,1,2$.  In the absence of upwinding, the convergence rates for $s=1$ were lower (closer to first order than to second order).  These observations are consistent with those of~\cite{guzman2016h,natale2017variational}, where the case of constant density is considered.

Figure~\ref{fig:rhosquared} plots the squared density errors $|1-F(t)/F(0)|$, $F(t) = \int_\Omega \rho_h(t)^2 \, dx$, for the simulations in Tables~\ref{tab:a0b0}-\ref{tab:a1b1} that used $h=\frac{1}{4}$.  As expected, squared density decayed monotonically with upwinding and remained constant (up to roundoff errors) without upwinding.  Total mass and energy errors (not plotted) remained below $10^{-13}$ in these experiments, both with and without upwinding.

To test the convergence of the method with respect to $\Delta t$, we repeated the above experiment with $s=2$, $h=2^{-4}$, and $\Delta t = 2^{-j}$, $j=1,2,3,4$.  The results in Tables~\ref{tab:a0b0_dt}-\ref{tab:a1b1_dt} suggest that the method converges at a second-order rate with respect to $\Delta t$, both with and without upwinding.

\begin{figure}
\centering
\begin{tikzpicture}
\begin{axis}[ymode=log,xlabel=$t$,ylabel=Squared density error,legend style={at={(1.4,-0.25)},
anchor=north east},
  legend columns=3,
  legend style={
    column sep=1ex,
  }] ]
\addplot[blue,mark=o] table [x expr=\coordindex*0.00625, y expr=-\thisrowno{0}]{rhosquaredminus1_variabledensity_a1b1o0d0r2.dat};
\addplot[red,mark=o] table [x expr=\coordindex*0.00625, y expr=-\thisrowno{0}]{rhosquaredminus1_variabledensity_a1b1o1d1r2.dat};
\addplot[black,mark=o] table [x expr=\coordindex*0.00625, y expr=-\thisrowno{0}]{rhosquaredminus1_variabledensity_a1b1o2d2r2.dat};
\addplot[blue] table [x expr=\coordindex*0.00625, y expr=abs(\thisrowno{0})]{rhosquaredminus1_variabledensity_a0b0o0d0r2.dat};
\addplot[red] table [x expr=\coordindex*0.00625, y expr=abs(\thisrowno{0})]{rhosquaredminus1_variabledensity_a0b0o1d1r2.dat};
\addplot[black] table [x expr=\coordindex*0.00625, y expr=abs(\thisrowno{0})]{rhosquaredminus1_variabledensity_a0b0o2d2r2.dat};
\legend{$s=0$ (upwind),$s=1$ (upwind),$s=2$ (upwind),$s=0$ (no upwind),$s=1$ (no upwind),$s=2$ (no upwind)}
\end{axis}
\end{tikzpicture}
\caption{Squared density errors $|1-F(t)/F(0)|$, $F(t) = \int_\Omega \rho_h(t)^2 \, dx$, for the simulations in Tables~\ref{tab:a0b0}-\ref{tab:a1b1} that used $h=\frac{1}{4}$.  Total mass and energy errors (not plotted) remained below $10^{-13}$ in these experiments, both with and without upwinding.}
\label{fig:rhosquared}
\end{figure}

\subsection{Rayleigh-Taylor Instability}

\begin{figure}
\centering
\includegraphics[scale=0.3,trim=9in 0in 9in 0in,clip=true]{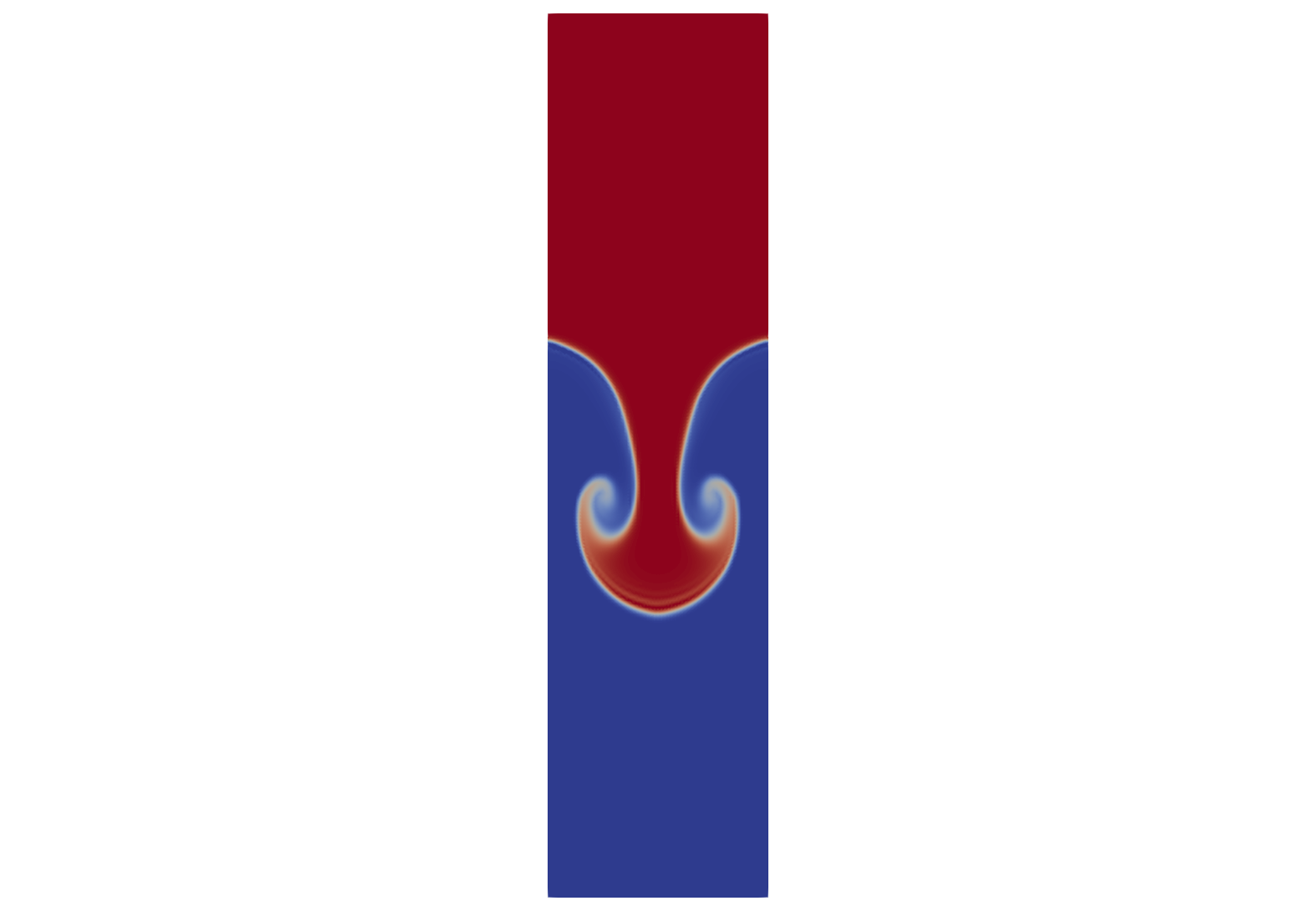}
\includegraphics[scale=0.3,trim=9in 0in 9in 0in,clip=true]{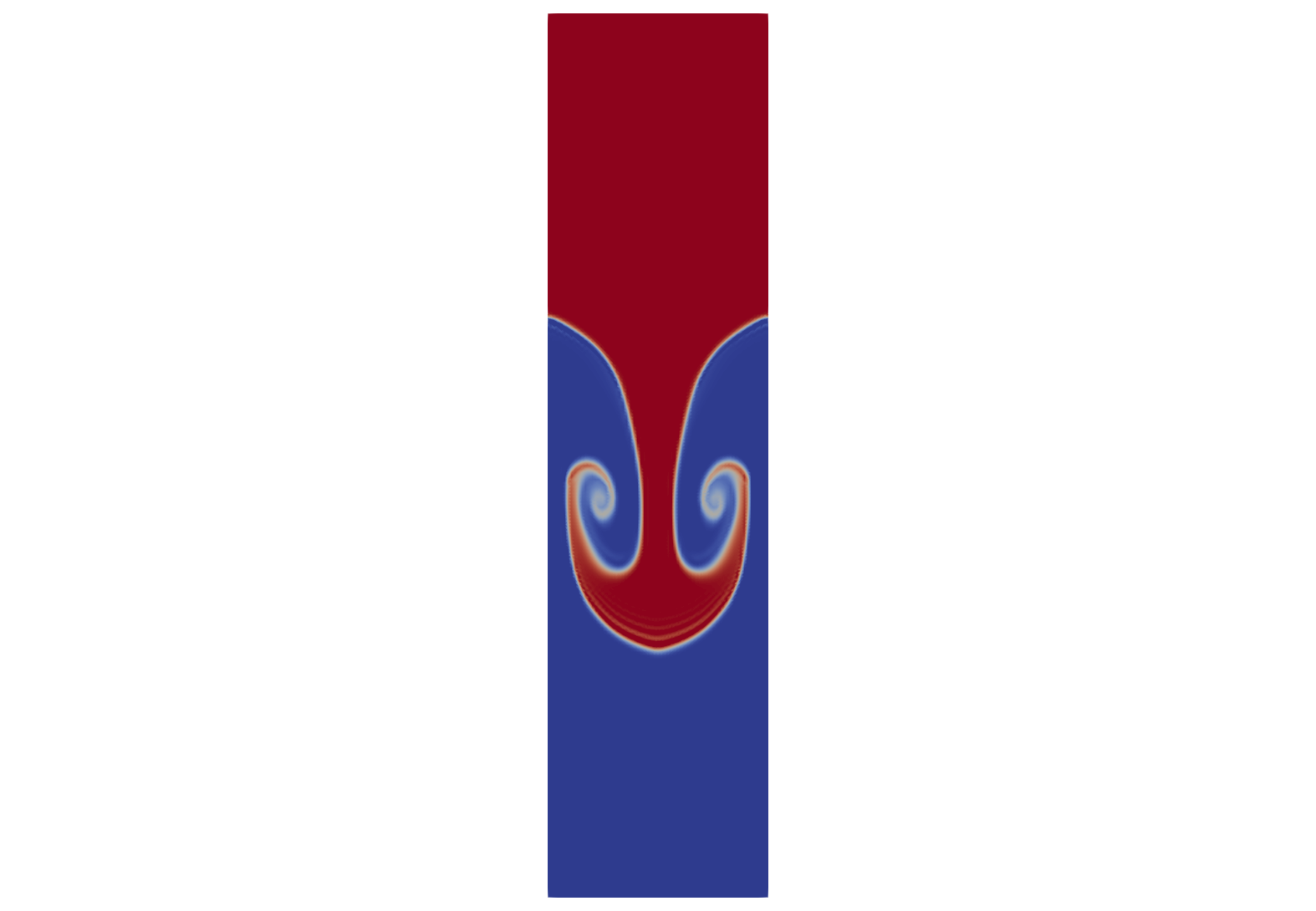}
\includegraphics[scale=0.3,trim=9in 0in 9in 0in,clip=true]{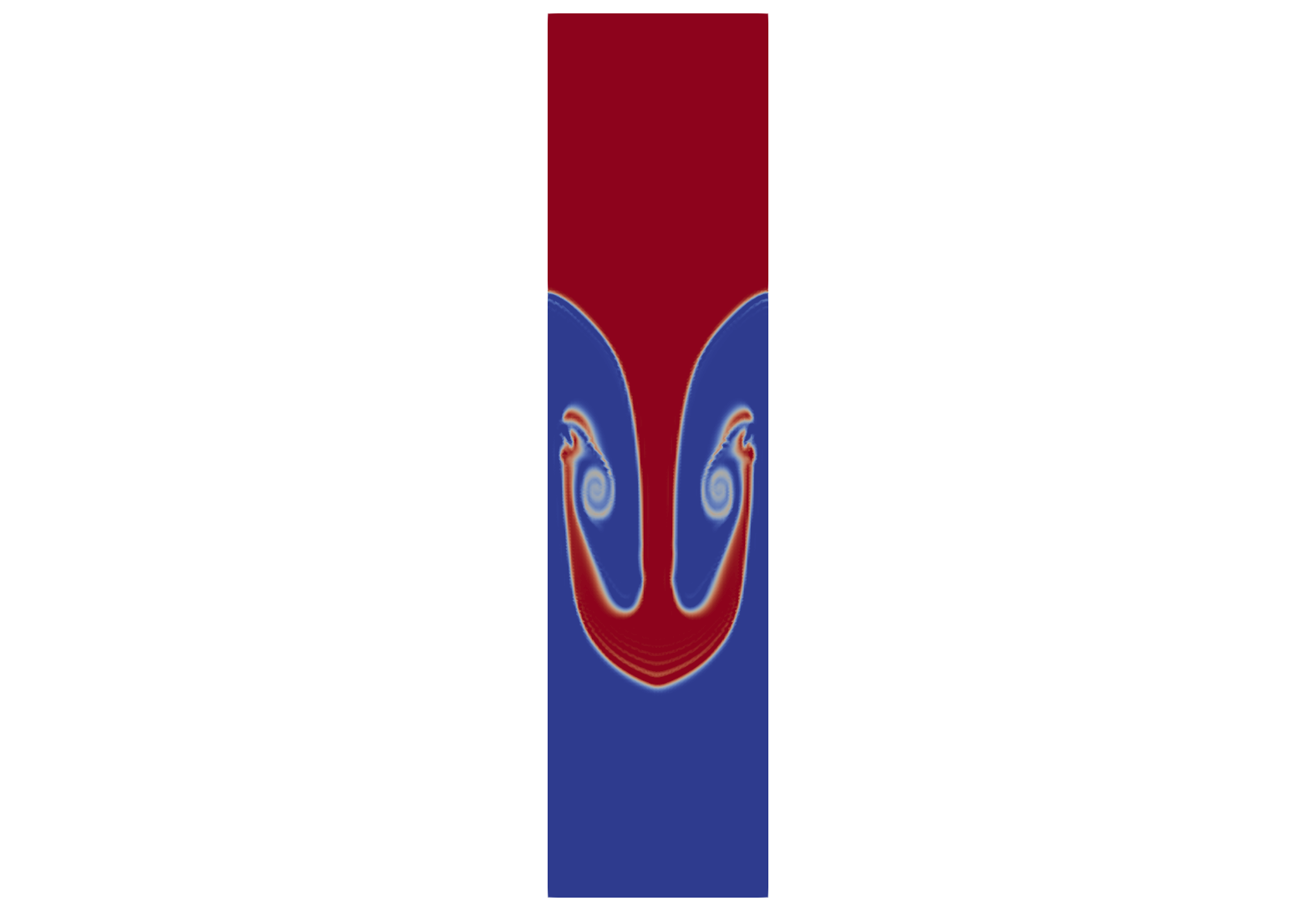}
\includegraphics[scale=0.3,trim=9in 0in 9in 0in,clip=true]{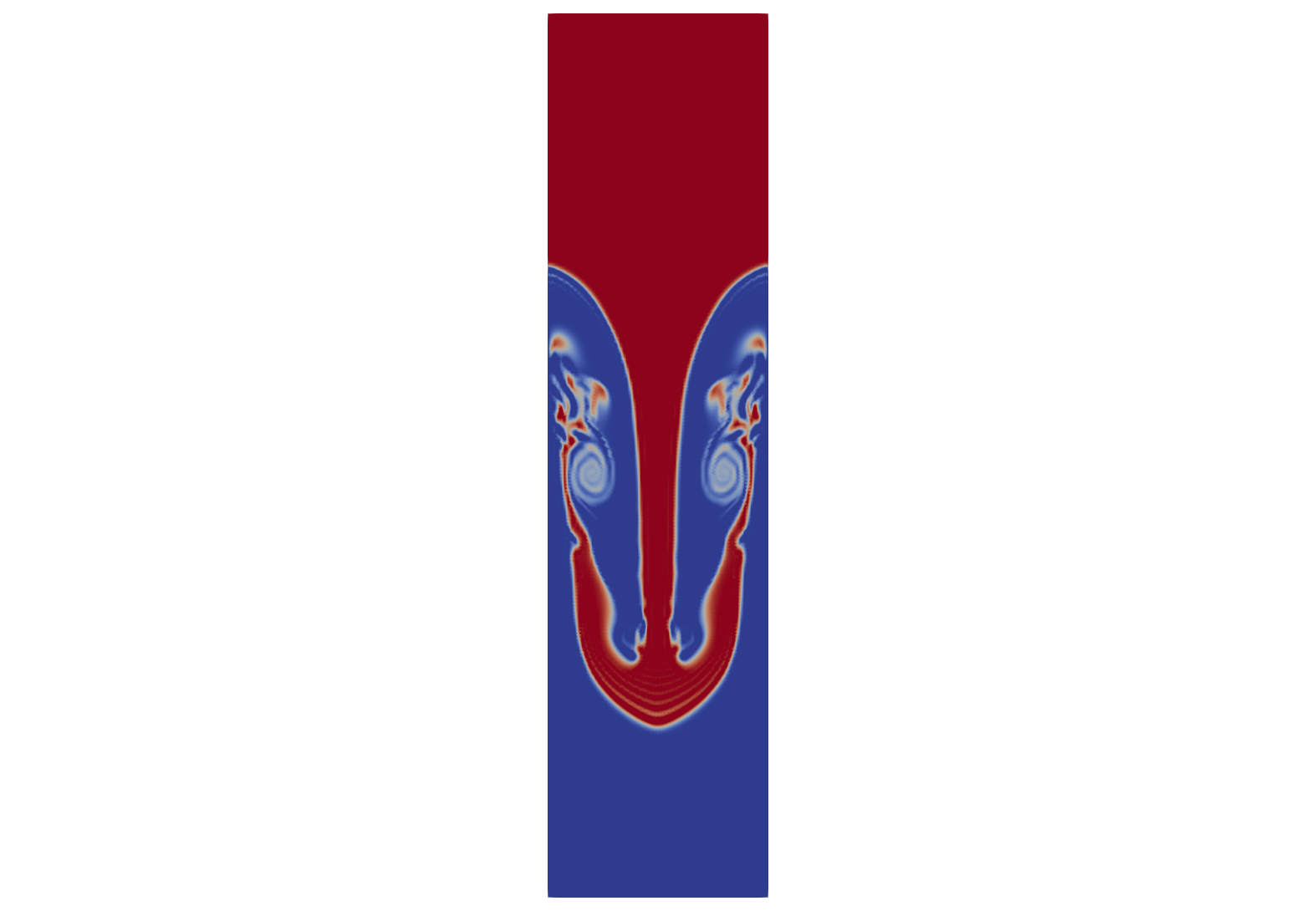}
\caption{Density contours at $t=0.8,0.95,1.1,1.25$ in the Rayleigh-Taylor instability simulation.}
\label{fig:raytay}
\end{figure}

As a second test, we simulated the Rayleigh-Taylor instability on a rectangle $\Omega = (-1/2,1/2) \times (2,2)$ with initial conditions
\begin{align*}
u(x,y,0) &= (0,0), \\
\rho(x,y,0) &= 2+\tanh\left( \frac{y+0.1 \cos(2\pi x)}{0.1} \right).
\end{align*}
For this test, we added a gravitational forcing term $\langle (0,-10)\rho_{k+1/2}, v \rangle$ to right-hand side of the momentum equation~(\ref{velocitydtstab}).  We used the upwind scheme with $\Delta t = 0.01$ and with $U_h = RT_0(\mathcal{T}_h)$, $F_h = DG_1(\mathcal{T}_h)$, and $Q_h = DG_0(\mathcal{T}_h)$ on a uniform triangulation $\mathcal{T}_h$ with maximum element diameter $h = 0.015625$.  Plots of the density at various times $t$ are shown in Figure~\ref{fig:raytay}.

\section{Conclusion}

We have constructed a numerical method for the incompressible Euler equations with variable density that exactly preserves total mass, total squared density, total energy, and pointwise incompressibility at the spatially and temporally discrete levels.  The method achieves second-order accuracy in time, and allows for the use of high-order finite elements to achieve high-order accuracy in space.  An upwinded version of the method was also described and proved to be stable.  Numerical tests illustrated its convergence order and its performance on a simulation of the Rayleigh-Taylor instability.

\section{Acknowledgements}

E.G. was partially supported by NSF grant DMS-1703719.  FGB was partially supported by the ANR project GEOMFLUID, ANR-14-CE23-0002-01.

%\printbibliography
\bibliographystyle{abbrv}
\bibliography{references}

\end{document}